\title{Graded nilpotent Lie algebras of infinite type}
\author{Boris Doubrov, Olga Radko}
\newcommand{\g}{\mathfrak g}
\newcommand{\gb}{\bar\g}
\newcommand{\h}{\mathfrak h}
\newcommand{\m}{\mathfrak m}
\newcommand{\n}{\mathfrak n}
\newcommand{\gl}{\mathfrak{gl}}
\newcommand{\End}{\operatorname{End}}
\newcommand{\Der}{\operatorname{Der}}
\newcommand{\Aut}{\operatorname{Aut}}
\newcommand{\rank}{\operatorname{rank}}
\newcommand{\im}{\operatorname{Im}}
\newcommand{\ad}{\operatorname{ad}}
\newcommand{\GL}{\mathrm{GL}}
\newcommand{\al}{\alpha}
\newcommand{\om}{\omega}
\newcommand{\C}{\mathbb{C}}
\newcommand{\R}{\mathbb{R}}
\newcommand{\cM}{\mathcal{M}}
\newcommand{\cE}{\mathcal{E}}
\newcommand{\cF}{\mathcal{F}}
\newcommand{\sll}{\mathfrak{sll}}
\newcommand{\spp}{\mathfrak{sp}}
\newcommand{\sym}{\operatorname{sym}}
\newcommand{\cchar}{\operatorname{char}}
\newtheorem*{tancr}{Tanaka criterium}
\newtheorem*{tanthm}{Tanaka theorem}
\newtheorem*{spencr}{Spencer criterium}
\newtheorem{thm}{Theorem}
\newtheorem{cor}{Corollary}
\theoremstyle{remark}
\newtheorem{rem}{Remark}
\subjclass[2000]{17B70, 53C30, 58A17}
\keywords{Graded nilpotent Lie algebras, Tanaka prolongation, metabelian Lie
algebras, Lie algebra cohomology}
\address{Belarussian State University, Nezavisimosti av.~4, 220030, Minsk, Belarus}
\email{doubrov@islc.org}
\begin{document}

\begin{abstract}
The paper gives the complete characterization of all graded nilpotent Lie
algebras with infinite-dimensional Tanaka prolongation as extensions of graded
nilpotent Lie algebras of lower dimension by means of a commutative ideal. We
introduce a notion of weak characteristics of a vector distribution and prove
that if a bracket-generating distribution of constant type does not have
non-zero complex weak characteristics, then its symmetry algebra is necessarily
finite-dimensional. The paper also contains a number of illustrative algebraic
and geometric examples including the proof that any metabelian Lie algebra with
a 2-dimensional center always has an infinite-dimensional Tanaka prolongation.
\end{abstract}

\maketitle

\section{Introduction}
This paper is devoted to the study of non-holonomic vector distributions with
infinite-dimensional symmetry algebras. The simplest examples of such
distributions are contact distributions on odd-dimensional manifolds and, more
generally, contact systems on jet spaces $J^k(\R^n,\R^m)$. The symmetry
algebras of such distributions are given by Lie--Backlund theorem and are
isomorphic to either the Lie algebra of all vector fields on
$J^0(\R^n,\R^m)=\R^{n+m}$ for $m\ge 2$ or to the Lie algebra of all contact
vector fields on $J^1(\R^n,\R)$ for $m=1$. In both cases the symmetry algebras
are infinite-dimensional.

We are interested in only so-called \emph{bracket-generating distributions},
i.e., we always assume that repetitive brackets of vector fields lying in $D$
generate the whole tangent bundle $TM$. If this is not the case, then $D$ lies
in a certain proper completely integrable vector distribution $D'$, and the
geometry of $D$ can be essentially reduced to the restrictions of $D$ to the
fibers of $D'$.

We shall also say that the distribution $D$ is \emph{degenerate}, if it
possesses non-zero \emph{Cauchy characterisitics}, i.e., vector fields $X\in D$
such that $[X,D]\subset D$. It is also clear that any degenerate distribution,
even if it is bracket-generating, has an infinite-dimensional symmetry algebra.
For the proof and other basic properties of vector distributions we refer
to~\cite[Chapter 2]{bcggg}.

While it seems to be very difficult to provide the complete local description
of all vector distributions with infinite-dimensional symmetry algebra, it
appears that it is possible to give the complete description of the their
symbol algebras.

Namely, let $D$ be a bracket-generating vector distribution on a smooth
manifold~$M$. Taking repetitive brackets of vector fields lying in $D$, we can
define \emph{a weak derived flag} of $D$:
\begin{gather*}
0 \subset D \subset D^2 \subset \dots \subset D^{\mu} = TM,\\
D^0 = 0,\quad D^{1} = D,\quad D^{i+1} = [D,D^i],\ i\ge 2.
\end{gather*}

At each point $p\in M$ we can defined the associated graded vector space
\begin{equation}\label{symbol}
\m(p) = \sum_{i<0} \m_{-i}(p),\quad \m_{-i}(p) = D_p^{i}/D_p^{i-1}.
\end{equation}
Since $[D^i,D^j]\subset D^{i+j}$ for all $i,j\ge 0$, $\m(p)$ is naturally
equipped with a structure of a graded Lie algebra. Namely, if $x\in \m_{-i}(p)$
and $y\in\m_{-j}(p)$ are two homogeneous elements in $\m(p)$, and $X\in D^i$,
$Y\in D^j$ are two vector fields such that $X_p+D^{i-1}_p=x$ and
$Y_p+D^{j-1}_p=y$, then the value of $[X,Y]+D^{i+j-1}_p$ depends only on $x$
and $y$, and, thus, defines a graded Lie algebra structure on $\m(p)$. It is
also clear from the definition, that $\m(p)$ is a nilpotent Lie algebra
generated by $\m_{-1}(p)$.

This Lie algebra is called a symbol of the distribution $D$ at a point $p\in
M$, and it plays essential role in study of $D$ and any geometric structures
subordinate to $D$. For example, if $D$ is a contact structure on a smooth
manifold of dimension $2n+1$, i.e., a non-degenerate codimension 1 distribution
on $M$, then its symbol is isomorphic to the $(2n+1)$-dimensional Heisenberg
Lie algebra at any point $p\in M$.

The family of graded Lie algebras $\m(p)$ is a basic invariant of any
bracket-generating distribution, which includes not only the dimensions of the
weak derived series of $D$, but also a non-trivial algebraic information. In
many cases the structure of these algebras has very important geometric
consequences and allows to associate various geometric structures with~$M$. One
of the most famous examples is E.~Cartan paper~\cite{car10}, where he
associates a $G_2$-geometry with any non-degenerate 2-dimensional vector
distribution on a 5-dimensional manifold.

We say that the distribution $D$ has constant symbol $\m$ or is of type $\m$ if
its symbols $\m(p)$ are isomorphic to $\m$ for all points $p\in M$. For
example, the contact distribution and all contact systems on jet spaces are of
constant type.

We shall use the term \emph{graded nilpotent Lie algebra} or simply GNLA for
any negatively graded Lie algebra $\m=\sum_{i=1}^{\mu}\m_{-i}$ generated by
$\m_{-1}$. We shall call such Lie algebra \emph{non-degenerate}, if $\m_{-1}$
does not include any non-zero central elements. This has a clear geometric
meaning. It is easy to see that the symbol of a bracket-generating distribution
$D$ is non-degenerate if and only if $D$ has no non-zero Cauchy
characteristics. The largest $\mu$ such that $\m_{-\mu}\ne 0$ is called
\emph{the depth} of $\m$.

It appears that we can derive the exact bound for the dimension of the symmetry
algebra of $D$ purely in terms of its symbol $\m$. Namely, Tanaka~\cite{tan1}
has shown in his pioneer works on the geometry of filtered manifolds, that
there is a well-defined graded Lie algebra $\g(\m)$ called Tanaka (or
universal) prolongation of $\m$. It is characterized by the following
conditions:
\begin{enumerate}
\item $\g_{i}(\m) = \m_i$ for all $i<0$;
\item if $[X,\m]=0$ for certain $X\in \g_i(\m)$, then $X=0$;
\item $\g(\m)$ is the largest graded Lie algebra satisfying the above two
conditions.
\end{enumerate}
The Lie algebra $\g(\m)$ has also a clear geometric meaning. Namely, let $M$ be
a connected simply connected Lie group with the Lie algebra $\m$. For example,
we can identify $M$ with $\m$ and define the Lie group multiplication in $M$ by
means of Campbell--Haussdorf series. Define the distribution $D$ on $M$ by
assuming that it is left invariant and that $D_e = \m_{-1}$. Then $\g(\m)$ can
be naturally identified with the graded Lie algebra associated with the
filtered Lie algebra of all germs of infinitesimal symmetries of $D$ at the
identity. In particular, if $\g(\m)$ is finite or infinite-dimensional, so is
the symmetry algebra of the corresponding filtration. Such distributions $D$
are called \emph{standard} distributions of type $\m$.

One of the main result of Tanaka paper~\cite{tan1} can be reformulated as
follows.
\begin{tanthm}[\cite{tan1}]
If $\g(\m)$ is finite-dimensional, then with each distribution $D\subset TM$ of
type $\m$ we can associate a canonical coframe on a certain bundle $P$ over $M$
of dimension $\dim \g(\m)$. In particular, the symmetry algebra of the
distribution $D$ is finite-dimensional, and its dimension is bounded by $\dim
\g(\m)$.
\end{tanthm}
Thus, if $D$ is a holonomic distribution with infinite-dimensional symmetry
algebra, then the Tanaka prolongation of its symbol $\g(\m)$ should also be
infinite-dimensional. The properties of Tanaka prolongation are also studied
in~\cite{yam93,yatsui,kuzmich,war07,ottazzi}.

The main result of this paper is Theorem~\ref{thm:1}, which gives the the
complete characterization of all GNLA with infinite-dimensional Tanaka
prolongation as extensions of graded nilpotent Lie algebras of lower dimension
by means of a commutative ideal. Along with this description we also introduce
a notion of \emph{weak characteristics} of a vector distribution and prove that
if a bracket-generating distribution of constant type does not have non-zero
complex weak characteristics, then its symmetry algebra is necessarily
finite-dimensional.

This article is closely related to a series of papers devoted to the geometry
of $2$ and $3$-dimensional distributions~\cite{ankru, dist2, dist3}. These
papers show that under very mild non-degeneracy conditions the symmetry algebra
of a non-holonomic vector distribution becomes finite-dimensional. As we show
in Theorem~\ref{thm:2}, the set of all GNLA, whose Tanaka prolongation is
infinite-dimensional, forms a closed subvariety in the variety of all GNLA~$\m$
with fixed dimensions of subspaces $\m_{-i}$, $i>0$.

Finally, in the last section of the paper we present a number of illustrative
algebraic and geometric examples including the proof that any metabelian Lie
algebra with a 2-dimensional center always has an infinite-dimensional Tanaka
prolongation.

We would like to express our gratitude to Ian Anderson, Tohru Morimoto, Ben
Warhurst, Igor Zelenko for stimulating discussions on the topic of this paper.

\section{Tanaka and Spencer criteria}
Below we assume that all Lie algebras we consider are defined over an arbitrary
field $k$ of characteristic $0$. We try to be as generic as possible in our
algebraic considerations and we explicitly state, when we require the base
field $k$ to be algebraically closed.

Let $\m$ be an arbitrary GNLA and let $\g(\m)$ be its Tanaka prolongation. We
say that $\m$ is \emph{of finite (infinite) type}, if $\g(\m)$ is
finite-dimensional (resp., infinite-dimensional). Note that this notion is
stable with respect to the base field extensions, since each subspace
$\g_i(\m)$, $i\ge 0$ of the Tanaka prolongation can be computed by solving  a
system of linear equations. Namely, assuming that all subspaces $\g_i(\m)$,
$i<k$ are already computed, the space $\g_k(\m)$ can be defined as follows:
\begin{multline*}
\g_k(\m) = \{ \phi \colon \m\to \sum_{i<k}\g_i(\m) \mid \phi(\m_{-i})\subset
\g_{k-i}(\m),\\
\phi([x,y])=[\phi(x),y]+[x,\phi(y)], \forall x,y\in\m \}.
\end{multline*}

Tanaka criterium reduces the question whether Tanaka prolongation $\g(\m)$ of
$\m$ is finite-dimensional or not to the same question for the standard
prolongation (as described, for example, in~\cite{stern}) of a certain linear
Lie algebra. Namely, let $\Der_0(\m)$ be the Lie algebra of all
degree-preserving derivations of $\m$. Define the subalgebra $\h_0\subset
\Der_0(\m)$ as follows:
\[
\h_0 = \{d \in \Der_0(\m) \mid d(x)=0\text{ for all } x\in \m_{-i}, i\ge 2\}.
\]
We can naturally identify $\h_0$ with a subspace in $\End(\m_{-1})$. Then
Tanaka criteruim can be formulated as follows:
\begin{tancr}[\cite{tan1}] Tanaka prolongation $\g(\m)$ of $\m$ is finite-dimensional if and
only if so is the standard prolongation of $\h_0 \subset \End(\m_{-1})$.
\end{tancr}

In its turn, Spencer criterium provides a computationally efficient method of
detecting whether the standard prolongation of a linear subspace $A\subset
\End(V)$ is finite-dimensional or not.
\begin{spencr}[\cite{spencer69,gqs}]
The standard prolongation of a subspace $A\subset \End(V)$ is
finite-dimensional if and only if $A^{\bar k}\subset \End(V^{\bar k})$ does not
contain endomorphisms of rank~$1$.
\end{spencr}
Here by $\bar k$ we mean the algebraic closure of our base field $k$, and by
$A^{\bar k}$ the subspace in $\End(V^{\bar k})$ obtained from $A$ by field
extension. A detailed and self-contained proof of Spencer criterium can be
found in~\cite{warottazzi}.

We emphasize that this criterium if computationally efficient, as the set of
all rank 1 endomorphisms in $A$ is described by a finite set of quadratic
polynomials, and, for example, Gr\"obner basis technique provides an algorithm
to determine whether this set of polynomials has a non-zero common root.

In the next section we use both these criteria to prove that all fundamental
Lie algebras $\m$ of infinite type over an algebraically closed field possess a
very special algebraic structure.

\section{Graded nilpotent Lie algebras of infinite type}
Let $\g$ be an arbitrary finite-dimensional Lie algebra, and let $V$ be an
arbitrary $\g$-module. We recall that a Lie algebra $\gb$ is called an
extension of $\g$ by means of $V$, if $\gb$ can be included into the following
exact sequence:
\[
0\to V \to \gb \to \g \to 0.
\]
In other words, $V$ is embedded into $\gb$ as a commutative ideal, the quotient
$\gb/V$ is identified with $\g$ and the natural action of $\g=\gb/V$ on $V$
coincides with the predefined $\g$-module structure on $V$. Two extensions
$\gb_1$ and $\gb_2$ are called equivalent, if there exists an isomorphism
$\gb_1\to\gb_2$ identical on $V$ and $\g\equiv\gb_1/V\equiv\gb_2/V$.

It is well-known that equivalence classes of such extensions are described by
the cohomology space $H^2(\g,V)$. Namely, if $[\alpha]$ is an element of
$H^2(\g,V)$, where $\alpha\in Z^2(\g,V)$, then $\gb$ can be identified as a
vector space with $\g\times V$ with the Lie bracket given by:
\begin{equation}\label{ext}
[(x_1,v_1), (x_2,v_2)] = ([x_1,x_2], x_1.v_2-x_2.v_1+\alpha(x_1,x_2)),
\end{equation}
$x_1,x_2\in\g$, $v_1,v_2\in V$.

Assume now that both the Lie algebra $\g$ and the $\g$-module $V$ are graded,
that is $\g=\sum\g_i$, $V=\sum V_j$ and $\g_i.V_j\subset V_{i+j}$. Then the
cohomology space $H^2(\g,V)$ is naturally turned into the graded vector space
as well:
\[
H^2(\g,V) = \sum H^2_i(\g,V).
\]
It is easy to see that the Lie algebra $\gb$ defined by~\eqref{ext} is also
graded if and only if $[\alpha]\in H_0^2(\g,V)$. Thus, we see that the graded
extensions $\gb$ of the graded Lie algebra $\g$ by means of the graded
$\g$-module $V$ are in one to one correspondence with the elements of the
vector space $H^2_0(\g,V)$.

We can introduce an additional group action on $H^2_0(\g,V)$ as follows. Define
$\Aut_0(\g,V)$ as a subgroup in $\Aut_0(\g)\times\GL_0(V)$ of the form:
\begin{multline*}
\Aut_0(\g,V)=\{(f,g)\in\Aut_0(\g)\times\GL_0(V)\mid
\\ g(x.v)=f(x).g(v),\ \forall x\in\g, v\in V\}.
\end{multline*}
This group naturally acts on $H_0^2(\g,V)$:
\[
(f,g).[\al]=[g\circ\al\circ f^{-1}].
\]
It is easy to see that elements of $H_0^2(\g,V)$ lying in the same orbit of
this action correspond to the isomorphic Lie algebras.

In this paper we shall consider \emph{special extensions of GNLA}. Namely, let
$\n$ be an arbitrary (possibly degenerate) GNLA. Fix any subspace
$W\subset\n_{-1}$ and consider $\n_{-1}/W$ as a graded commutative Lie algebra
concentrated in degree $-1$. Take any graded $\n_{-1}/W$-module
$V=\sum_{i=1}^{\mu}V_{-i}$ generated by $V_{-1}$ (as a module). Assuming that
$W.V=0$ and $n_{-i}.V=0$ for all $i\ge 2$ we can consider $V$ as an
$\n$-module. We call any extension of the GNLA $\n$ by means of such
$\n$-module $V$ \emph{a special GNLA} extension. Let $[\al]$ be the
corresponding element of $H_0^2(\n,V)$. It is easy to see that such extension
is non-degenerate if and only if the following two conditions hold:
\begin{enumerate}
\item the condition $\n.v=0$ implies $v=0$ for $v\in V_{-1}$;
\item the cocycle $\al$ is non-degenerate on $Z(\n)\cap \n_{-1}$.
\end{enumerate}
We note that unlike the extensions of GNLA considered in the
papers~\cite{kuzmich,ankru}, the special extensions defined above are not
central extensions of Lie algebras, as the ideal $V$ above does not in general
belong to the center, and the action of~$\n$ on~$V$ is non-trivial.

The main result of the paper is:
\begin{thm}\label{thm:1}
Let $\m$ be a non-degenerate GNLA oven an algebraically closed field. Then the
following conditions are equivalent:
\begin{enumerate}
\item there exists such $d\in \Der_0(\m)$ that $d(\m_{-i})=0$ for $i\ge 2$ and
$\rank d = 1$;
\item there exists such $y\in\m_{-1}$ that $\rank \ad y = 1$; in particular, in
this case $[y,\m_{-i}]=0$ for all $i\ge 2$;
\item $\m$ can be represented as a special extension of a certain (possibly degenerate) GNLA $\n$
with $\dim \m_{-1}=\dim \n_{-1} + 1$ and $\dim V_{-i} = 1$ for all
$i=-1,\dots,-\mu$ and $\mu\ge 2$;
\item $\m$ is of infinite type.
\end{enumerate}
\end{thm}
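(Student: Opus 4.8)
The plan is to treat $(1)\Leftrightarrow(4)$ as a direct consequence of the Tanaka and Spencer criteria, and then to run a cycle $(3)\Rightarrow(2)\Rightarrow(1)$ together with $(2)\Rightarrow(3)$ by explicit constructions, so that the whole weight of the theorem falls on the single implication $(1)\Rightarrow(2)$. For $(1)\Leftrightarrow(4)$ I would restrict derivations to $\m_{-1}$: a rank-one $d\in\Der_0(\m)$ with $d(\m_{-i})=0$ for $i\ge 2$ is literally the same datum as a rank-one element of $\h_0\subset\End(\m_{-1})$. Since $k$ is algebraically closed we have $\h_0^{\bar k}=\h_0$, so the Spencer criterium says the standard prolongation of $\h_0$ is infinite-dimensional exactly when such a rank-one element exists, and the Tanaka criterium identifies finiteness of that prolongation with finiteness of $\g(\m)$. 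Thus (1) and (4) are equivalent with no extra work.

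The two elementary constructions come next. For $(2)\Rightarrow(1)$, given $y\in\m_{-1}$ with $\rank\ad y=1$, non-degeneracy forces $[y,\m_{-1}]\neq 0$; as the image of $\ad y$ is a single line it must already lie in $\m_{-2}$, whence $[y,x]=\xi(x)w$ for some $\xi\in\m_{-1}^*$, $w\in\m_{-2}$, and $[y,\m_{-i}]=0$ for $i\ge 2$ (this also proves the parenthetical claim in (2)). Setting $D(x)=\xi(x)y$ gives a rank-one endomorphism of $\m_{-1}$, and the identity $[Da,b]+[a,Db]=\xi(a)[y,b]-\xi(b)[y,a]=\xi(a)\xi(b)w-\xi(b)\xi(a)w=0$ shows $D\in\h_0$. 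For $(3)\Rightarrow(2)$, take any nonzero $y\in V_{-1}$; since $V$ is an abelian ideal with $\n_{-i}.V=0$ for $i\ge 2$, the image of $\ad y$ is $\n_{-1}.y\subseteq V_{-2}$, which is one-dimensional, and the first non-degeneracy condition of the special extension guarantees it is nonzero, so $\rank\ad y=1$.

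The implication $(2)\Rightarrow(3)$ is the construction of the commutative ideal. With $[y,x]=\xi(x)w$ as above, choose $x_0$ with $\xi(x_0)\neq 0$, put $K=\ker\xi$ (note $y\in K$ since $\xi(y)=0$), and define $V_{-i}=\langle(\ad x_0)^{i-1}y\rangle$. I would prove by induction on $i$, using the Jacobi identity, the two annihilation statements $[K,V_{-i}]=0$ and $[\m_{-j},V_{-i}]=0$ for all $j\ge 2$; the base case is exactly $[y,K]=0$ and $[y,\m_{-j}]=0$, and the step rewrites $[x,[x_0,v]]=[[x,x_0],v]+[x_0,[x,v]]$ so that both terms vanish by the inductive hypothesis (the bracket $[x,x_0]$ lands in $\m_{-2}$). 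These relations show that $V=\sum_i V_{-i}$ is an abelian graded ideal with one-dimensional graded pieces, that $\n=\m/V$ is again a GNLA, and that $V$ is a cyclic module over the one-dimensional algebra $\n_{-1}/W$ with $W=K/\langle y\rangle$, annihilated by $W$ and by $\n_{-j}$ for $j\ge 2$; this is precisely the data of a special extension, with $\dim\m_{-1}=\dim\n_{-1}+1$, $\dim V_{-i}=1$, and $\mu\ge 2$ because $w=[y,x_0]\neq 0$.

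The main obstacle is $(1)\Rightarrow(2)$. A rank-one $d\in\h_0$ constrains only the bracket $\m_{-1}\wedge\m_{-1}\to\m_{-2}$: writing $d|_{\m_{-1}}=\eta\otimes u$, the derivation identity gives $[u,x]=\eta(x)c$ with $c\in\m_{-2}$, so $\ad u$ has rank one on $\m_{-1}$, but there is a priori no control over $[u,\m_{-2}]$, and $u$ itself need not annihilate $\m_{-j}$ for $j\ge 2$. The heart of the proof is therefore to upgrade an endomorphism that is rank one on $\m_{-1}$ to a genuine element $y\in\m_{-1}$ whose full adjoint representation has rank one. The approach I would take is induction on the depth $\mu$: the derivation $d$ descends to the quotient of $\m$ by its bottom central layer $\m_{-\mu}$, where it is still a rank-one element of the corresponding $\h_0$, so the inductive hypothesis produces the desired element for the quotient, which I would then lift to $\m$ and correct, using non-degeneracy and algebraic closedness to force the deeper brackets $[y,\m_{-j}]$ to vanish. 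The delicate point — and the step I expect to require the most care — is that passing to the quotient can a priori create degeneracy or lose the rank-one element, so the induction must be set up so that non-degeneracy and the exact rank are preserved at each stage.
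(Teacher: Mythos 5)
Your cycle $(2)\Rightarrow(1)$, $(2)\Rightarrow(3)$, $(3)\Rightarrow(2)$ and the reduction of $(1)\Leftrightarrow(4)$ to the Tanaka and Spencer criteria all match the paper's proof and are correct. The problem is the implication you yourself single out as the heart of the matter, $(1)\Rightarrow(2)$: you do not prove it. You propose an induction on the depth $\mu$ via the quotient by $\m_{-\mu}$, and you explicitly leave open the ``delicate point'' that the quotient may become degenerate or lose the rank-one element; indeed it can (killing the bottom layer can make elements of $\m_{-1}$ central), so as written this is a plan with an acknowledged hole, not an argument.

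Moreover, the premise that makes you reach for this machinery is mistaken. You write that a rank-one $d\in\h_0$ with $d|_{\m_{-1}}=\eta\otimes u$ ``constrains only the bracket $\m_{-1}\wedge\m_{-1}\to\m_{-2}$'' and that ``there is a priori no control over $[u,\m_{-2}]$.'' But the derivation identity applied to $[x,z]$ with $x\in\m_{-1}$, $z\in\m_{-j}$, $j\ge 2$ gives
\[
0=d([x,z])=[d(x),z]+[x,d(z)]=\eta(x)\,[u,z]+0,
\]
since $[x,z]$ lies in degree $\le -3$ and $d$ kills all of $\sum_{i\ge2}\m_{-i}$. Choosing $x$ with $\eta(x)\neq0$ forces $[u,z]=0$, i.e.\ $[u,\m_{-j}]=0$ for all $j\ge2$. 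Combined with your own computation $[u,x]=\eta(x)c$ on $\m_{-1}$, the image of $\ad u$ on all of $\m$ is the line $\langle c\rangle$, nonzero by non-degeneracy, so $\rank\ad u=1$ and $y=u$ works. This three-line argument (phrased in the paper as $[y,W]=[d(x),W]=-[x,d(W)]=0$ with $W=\ker d\supset[\m,\m]$) is the paper's entire proof of $(1)\Rightarrow(2)$; no induction on depth, no lifting, and no correction step is needed. You should replace your sketch by this direct computation.
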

\begin{proof}
$(1)\Rightarrow(2)$. Let $\h$ be a subalgebra in $\Der_0(\m)$ defined by
condition $d(\m_{-i})=0$ for any $d\in\h$. Let $d\in\h$ and $\rank d = 1$.
Define $W\subset \m$ as $W=\ker d$. Choose non-zero $x,y\in\m_{-1}$ such that
$d(x) = y$. Then we have $\m_{-1}=\langle x \rangle \oplus W$, and $W \supset
[\m,\m]$. Next,
\[
[y, W] = [d(x), W] = - [x,d(W)] = 0.
\]
Since $\m$ is non-degenerate, this implies that $\rank \ad y = 1$ and $[x,y]\ne
0$. We also have
\[
[x, d(y)] = - [d(x),y]=-[y,y]=0.
\]
On the other hand,
\[
[d(y),W] = -[y,d(W)] = 0.
\]
Hence, $[d(y),\m]=0$, and from non-degeneracy of $\m$ it follows that $d(y)=0$.
In particular, $d$ is nilpotent.

$(2)\Rightarrow(1)$. Let $y\in \m_{-1}$ such that $\rank\ad y = 1$. Let $W=\ker
\ad y$. Choose such $x\in\m_{-1}$ that $[x,y]\ne 0$. Such $x$ always exists by
non-degeneracy of $\m$. It is clear that $\m=\langle x\rangle \oplus W$ and
$y\in W$. It is also clear that $W\supset [\m,\m]$. Define $d\in\h$ as $d(x)=y$
and $d(W)=0$. It is easy to see that $d$ is indeed a differentiation of $\m$.

$(2)\Rightarrow(3)$. Again, let $x,y$ be elements in $\m_{-1}$ such that $\rank
\ad y = 1$, $[x,y]\ne 0$, $\m=\langle x \rangle W$, $W\supset [\m,\m]$ and
$[y,W]=0$. Set $y_1=y$ and define $y_{i+1}=[x,y_{i}]$ for all $i\ge 2$. Note
that $y_2\ne 0$. Let $\mu$ be the smallest integer such that $y_{\mu}\ne 0$ and
$y_{\mu+1}=0$. By induction we get:
\[
[y_{i+1},W]=[[x,y_i],W]=[x,[W,y_i]]+[y_i,[x,W]]=0.
\]
Let $V=\langle y_1,\dots y_{\mu}$. It is clear that $V\subset W$, $[V,W]=0]$
and $[x,V]\subset V$. Hence, $V$ is a commutative ideal in $\m$. Put $\n=\m/V$.
It is easy to see that $V$ is generated by $V_{-1}$ as an $\n$-module and the
action of $[\n,\n]$ on $V$ is trivial. Finally, it is evident that $\n$ is also
generated by $\n_{-1}$ as a graded Lie algebra. Thus, $\m$ is represented as a
special extension:
\begin{equation}\label{eq:specext}
0 \to V \to \m \to \n \to 0.
\end{equation}
$(3)\Rightarrow(2)$. Let $\m$ be a special extension~\eqref{eq:specext} of $\n$
with $\dim V_{-i}=1$, $i=1,\dotsm,\mu$. Take $y$ as a non-zero element in
$V_{-1}$. Since $[\n,\n]$ acts trivially on $V$, we see that $[y,\m]\subset
V_{-2}$. Since $V$ is generated by $V_{-1}$ as an $\n$-module, we see that $\ad
y \ne 0$ and $\rank \ad y = 1$.

$(4)\Leftrightarrow (1)$. This is exactly the combined Tanaka and Spencer
criteria.
\end{proof}

\begin{rem}
Only the last implication uses the fact that the base field is algebraically
closed. In case of arbitrary field of characteristic 0 the items (1), (2) and
(3) are still equivalent and imply that $\m$ has infinite type. a simple
counterexample for the implication $(4)\Rightarrow(1)$ over $\R$ is given by
the 3-dimensional complex Heisenberg Lie algebra (with an obvious grading of
depth 2) viewed as a 6-dimensional real Lie algebra.
\end{rem}
\begin{rem}
Condition~(2) of the Theorem appears already in~\cite{ottazzi} as a sufficient
condition for $\m$ to be of infinite type.
\end{rem}

Item~(3) of Theorem~\ref{thm:1} gives an inductive algorithm for constructing
all GNLA of infinite type. Namely, to construct all GNLA $\m$ of infinite type
and $\dim \m_{-1} = n+1$ one needs to:
\begin{itemize}
\item take an arbitrary GNLA $\n$ (of finite or infinite type) such that
$\dim\n_{-1}=n$;
\item take an arbitrary subspace $W\subset \n_{-1}$ of codimension~$1$;
\item fix any integer $s\ge 2$ and construct a graded $\n_{-1}/W$-module
$V=\sum_{i=1}^s V_{-i}$, which is uniquely defined by the conditions $\dim
V_{-i}=1$, $i=1,\dots,s$ and $V_{-i-1} = (\n_{-1}/W).V_{-i}$;
\item compute cohomology space $H_0^2(\n,V)$, where $V$ is treated as an
$\n$-module with the trivial action of $W\oplus [\n,\n]$;
\item take any $\om\in H^2_0(\n,V)$ and define $\m$ as an extension of $\n$ by means of $V$
corresponding to $\om$.
\end{itemize}

In coordinates this procedure can be reformulated as follows. Fix any convector
$\alpha \in \n_{-1}^*$ and an element $X\in \n_{-1}$ such that $\alpha(X)=1$.
Extend $X$ to the basis $\{X,Z_1,\dots,Z_r\}$ of $\n$ consisting of homogeneous
elements. Define $\m$ as a Lie algebra with a basis
$\{X,Y_1,\dots,Y_s,Z_1,\dots,Z_r\}$, $s\ge 2$, where
\begin{align*}
[X,Y_i] &= Y_{i+1}, \quad i =1,\dots,s-1;\\
[Z_j,Y_i] &= 0,\quad i =1,\dots,s,\ j=1,\dots,r;\\
[Y_i,Y_j] &= 0,\quad i,j=1,\dots,s;\\
[X,Z_i]_{\m} &= [X,Z_i]_{\n} + \sum_{j=1}^s a_i^j Y_j;\\
[Z_i,Z_j]_{\m} &= [Z_i,Z_j]_{\n} + \sum_{k=1}^s b_{ij}^k Y_k,
\end{align*}
where the constants $a_i^j$ and $b_{ij}^k$ define a cocycle $\wedge^2 \n\to V$
of degree $0$ (i.e., $\m$ is a graded Lie algebra) and are viewed modulo the
changes of the basis $X\mapsto X + \sum_{i=1}^s c_i Y_s$, $Z_i\mapsto
Z_i+\sum_{j=1}^s d_i^j Y_j$. See the next section for the concrete examples.

Theorem~\ref{thm:1} provides also an easy way to prove that the set of all
graded Lie algebras of infinite type forms forms a closed algebraic subvariety
in the variety of all graded nilpotent Lie algebras. Namely, fix a sequence of
integers $k_1,\dots,k_\mu$ and denote by $M(k_1,\dots,k_\mu)$ a variety of all
graded nilpotent Lie algebras $\m$ such that $\dim \m_{-i} = k_i$ for
$i=1,\dots,\mu$ and $\m_{-i}=0$ for $i>\mu$. It is easy to see that it is a
closed algebraic variety in the vector space of all skew-symmetric graded
algebras (without any conditions on multiplication).

The set of all fundamental graded Lie algebras in $M(k_i)$ is distinguished by
conditions $[\m_{-1},\m_{-i}]=\m_{-i-1}$ for all $i=1,\dots,\mu-1$. It is easy
to see that these conditions define an open subset (in Zarisski topology) in
$M(k_i)$, which we shall denote by $MF(k_i)$.

Define $MF^\infty(k_i)$ as the set of all fundamental Lie algebras of infinite
type.
\begin{thm}\label{thm:2}
$MF^\infty(k_i)$ is a closed subvariety in $MF(k_i)$.
\end{thm}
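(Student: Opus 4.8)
The goal is to show that $MF^\infty(k_i)$ is closed in $MF(k_i)$, and the natural strategy is to reinterpret the membership condition via item (2) of Theorem~\ref{thm:1} and exhibit it as the vanishing locus of a family of polynomial equations in the structure constants. By Theorem~\ref{thm:1}, over an algebraically closed field a non-degenerate fundamental GNLA $\m$ has infinite type if and only if there exists a non-zero $y\in\m_{-1}$ with $\rank\ad y=1$. The plan is therefore to describe the set of such algebras as the image (or, better, a projection that can be shown to be closed) of a condition that is manifestly algebraic.

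First I would set up the incidence variety. On $MF(k_i)$ the Lie bracket is encoded by structure constants, which are the ambient coordinates, and $\m_{-1}$ is a fixed coordinate subspace of dimension $k_1$. Consider the product $MF(k_i)\times\mathbb{P}(\m_{-1})$, where $\mathbb{P}(\m_{-1})$ is the projectivization of the fixed space $\m_{-1}$ (this is legitimate since $\rank\ad y=1$ is a well-defined condition on the line $\langle y\rangle$). For a pair $(\m,[y])$, the operator $\ad y\colon\m\to\m$ is linear in the structure constants of $\m$ and linear in the coordinates of $y$, hence its matrix entries are bihomogeneous polynomials. The condition $\rank\ad y\le 1$ is cut out by the vanishing of all $2\times 2$ minors of this matrix, and the condition $\ad y\neq 0$ (equivalently $\rank\ad y=1$, since $y\neq 0$ and $\m$ is non-degenerate forces $\ad y\neq 0$) is automatic on fundamental non-degenerate algebras. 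Thus the incidence set
\[
Z=\{(\m,[y])\in MF(k_i)\times\mathbb{P}(\m_{-1})\mid \rank\ad y\le 1\}
\]
is a closed subvariety, being defined by the vanishing of the $2\times 2$ minors.

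The second and decisive step is to pass from $Z$ to its image under the projection $\pi\colon MF(k_i)\times\mathbb{P}(\m_{-1})\to MF(k_i)$, since $MF^\infty(k_i)=\pi(Z)$ (using that on non-degenerate fundamental algebras $\rank\ad y\le 1$ together with $y\neq 0$ gives exactly $\rank\ad y=1$). The key point is that $\mathbb{P}(\m_{-1})$ is a \emph{projective} (hence complete) variety, so by the standard fact that the projection from a product with a complete variety is a closed map, $\pi(Z)$ is closed in $MF(k_i)$. This is precisely where projectivizing rather than taking the affine space $\m_{-1}$ pays off: it is what upgrades ``image of a closed set'' to ``closed set.''

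The main obstacle I anticipate is not the algebraicity of $Z$, which is routine, but the bookkeeping needed to guarantee that $MF^\infty(k_i)$ equals $\pi(Z)$ exactly, rather than being a proper subset. Concretely, I must check that every $\m\in MF^\infty(k_i)$ is automatically non-degenerate (or restrict $MF(k_i)$ to non-degenerate algebras and verify non-degeneracy is itself an open condition so that the ambient space is well-behaved), and that for such $\m$ the existence of $[y]$ with $\rank\ad y\le1$ genuinely forces $\rank\ad y=1$ rather than $\ad y=0$. The latter follows because $\ad y=0$ with $y\in\m_{-1}$ would make $y$ a central element of $\m_{-1}$, contradicting non-degeneracy. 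Once this equivalence is pinned down, Theorem~\ref{thm:1}(2)$\Leftrightarrow$(4) identifies $\pi(Z)$ with $MF^\infty(k_i)$ and the completeness argument finishes the proof.
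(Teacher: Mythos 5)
Your proposal is correct and is essentially the paper's argument: the paper likewise encodes the condition $\rank\ad y\le 1$ by the $2\times 2$ minors of $\ad y$ (a system of quadratic equations in $y$ with coefficients polynomial in the structure constants) and then invokes the main theorem of elimination theory (citing Harris, Theorem~3.12) --- which is exactly the completeness-of-$\mathbb{P}(\m_{-1})$ projection argument you spell out via the incidence variety. Your extra care about degenerate algebras (where $\ad y=0$) is a detail the paper glosses over, but it causes no problem since degenerate fundamental GNLAs are themselves of infinite type.
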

\begin{proof}
The existence of an element $y\in\m_{-1}$ such that $\rank \ad y = 1$ (and,
hence, $[y,\m_{-i}]=0$ for $i\ge 2$ can be reformulated as an existence of a
non-zero solution of a certain system of quadratic equations (2 by 2 minors of
the matrix of $\ad y$), whose coefficients are structure constants of the Lie
algebra $\m$. It is well-known~\cite[Theorem~3.12]{harris} that that the
conditions when such non-zero solution exists are given in terms of the
algebraic equations on the coefficients of this system.
\end{proof}

Theorem~\ref{thm:1} also has a number of immediate geometric applications.
Namely, let $D$ be a bracket generating distribution with a constat symbol
$\m$. As usual, we assume that $D$ has no Cauchy characteristics. This implies
that $\m$ is non-degenerate, i.e., does not have non-zero central elements
lying in $\m_{-1}$.

We say that a vector field $Y\in D$ is \emph{a weak characteristic of $D$}, if
the following conditions hold:
\begin{enumerate}
\item $Y\in \cchar(D^i)$ for all $i\ge 2$, or, in other words, $[Y, (D^i)]\subset
(D^i)$ for all $i\ge 2$;
\item the subbundle $D'\subset D$ spanned by all $X\in D$ such that
$[X,Y]\subset D$ has codimension $1$ in $D$.
\end{enumerate}
Similarly, we can define \emph{complex weak characteristics} of a vector
distribution $D$ as sections of the comlpexified subbundle $D^{\C}$ of the
comlpexified tangent bundle $T^{\C}M$ satisfying the conditions (1) and (2)
above, where we replace powers $D^i$ by their complexified versions.

\begin{thm}\label{thm:3}
Let $D$ be a bracket generating distribution with a constant symbol $\m$.
Assume that $D$ has no Cauchy characteristics and that $\dim \sym(D)=\infty$.
Then $D$ contains non-trivial complex week characteristics.
\end{thm}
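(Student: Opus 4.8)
\section*{Proof proposal for Theorem~\ref{thm:3}}

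The plan is to reduce the claim to Theorem~\ref{thm:1} over the complex field and then to convert the resulting algebraic element into an honest vector field. First I would apply the Tanaka theorem: since $\dim\sym(D)=\infty$, the prolongation $\g(\m)$ cannot be finite-dimensional, so $\m$ is of infinite type over the ground field $\R$. Because finiteness of type is stable under base-field extension (each $\g_i(\m)$ is cut out by a linear system in the structure constants of $\m$), the complexification $\m^{\C}=\m\otimes_{\R}\C$ is of infinite type over $\C$. It is moreover non-degenerate: the absence of Cauchy characteristics gives $Z(\m)\cap\m_{-1}=0$, a linear condition that survives complexification.

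Next I would invoke the equivalence $(4)\Leftrightarrow(2)$ of Theorem~\ref{thm:1} over the algebraically closed field $\C$, which yields a non-zero $y\in\m^{\C}_{-1}$ with $\rank\ad y=1$ and $[y,\m^{\C}_{-i}]=0$ for all $i\ge2$. The key observation is that this single rank condition is exactly the infinitesimal form of the two defining properties of a weak characteristic. Indeed, for a section $Y$ of $D^{\C}$ with symbol $y_p\in\m^{\C}_{-1}(p)=D^{\C}_p$ at each point, the bracket $[Y,Z]$ with $Z$ a section of $(D^i)^{\C}$ always lies in $(D^{i+1})^{\C}$, and its class modulo $(D^i)^{\C}$ equals the symbol bracket $[y_p,z_p]$. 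Hence condition~(1) is equivalent to $\ad y_p$ annihilating $\m^{\C}_{-i}(p)$ for $i\ge2$, and condition~(2) is equivalent to $\ad y_p$ having rank $1$ on $\m^{\C}_{-1}(p)$; together they amount to $\rank\ad y_p=1$.

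It remains to upgrade the pointwise element $y$ to a non-vanishing section $Y$ of $D^{\C}$ with $\rank\ad Y_p=1$ everywhere, and this is where I expect the actual work to lie. Here the constant symbol hypothesis is essential: the subbundles $D^i$ have constant rank, so $\m(p)=\sum_{i\ge1}D^i_p/D^{i-1}_p$ is a smooth bundle of graded Lie algebras with all fibers isomorphic to $\m$, and the associated bundle of grading-preserving isomorphisms $\phi_p\colon\m\to\m(p)$ is a smooth principal $\Aut_0(\m)$-bundle, hence admits local sections. Choosing such a local adapted frame $p\mapsto\phi_p$, complexifying it, and setting $Y_p=\phi_p^{\C}(y)$ produces a smooth non-vanishing section of $D^{\C}$; as each $\phi_p^{\C}$ is a graded isomorphism it conjugates $\ad y$ into $\ad Y_p$, so $\rank\ad Y_p=\rank\ad y=1$ at every point. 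By the dictionary above, $Y$ is a non-trivial complex weak characteristic, defined near any point, which suffices since an infinite-dimensional symmetry algebra is a local matter. The main obstacle is precisely the local triviality of this symbol frame bundle, the only genuinely geometric ingredient, which is exactly what the constant-type assumption delivers.
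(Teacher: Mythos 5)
Your proposal is correct and follows exactly the route the paper takes: Tanaka's theorem forces $\g(\m)$ to be infinite-dimensional, the equivalence $(4)\Leftrightarrow(2)$ of Theorem~\ref{thm:1} (applied to the complexification) produces a rank-one element $y\in\m^{\C}_{-1}$, and this element is the symbol of a complex weak characteristic. The paper states this as an immediate consequence in two lines; you have merely filled in the details it omits, in particular the dictionary between $\rank\ad y=1$ and the two defining conditions of a weak characteristic, and the use of the constant-type hypothesis to promote $y$ to a smooth local section via an adapted frame.
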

\begin{proof}
This immediately follows from item~(2) of Theorem~\ref{thm:1} and the Tanaka
theorem~\cite{tan1} that bounds the dimension of $\sym (D)$ by the dimension of
the Tanaka prolongation of its symbol $\m$.
\end{proof}
\begin{cor}
If $D \cap (\cap_{i\ge 2} \cchar (D^i)) = 0$, then the symmetry algebra of $D$
is finite-dimensional.
\end{cor}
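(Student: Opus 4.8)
The plan is to prove the contrapositive and reduce the statement to Theorem~\ref{thm:3}. So I would assume that $\sym(D)$ is infinite-dimensional and derive that $D \cap \bigcap_{i\ge 2}\cchar(D^i) \ne 0$. Since $D$ is bracket-generating of constant symbol $\m$ and has no Cauchy characteristics, all hypotheses of Theorem~\ref{thm:3} hold, so $D$ contains a non-trivial complex weak characteristic $Y$, i.e.\ a non-zero section of $D^{\C}$ satisfying conditions (1) and (2) of the definition. For the present statement only condition~(1) is needed: it says exactly that $Y$ is a section of $D^{\C}$ lying in $\cchar((D^i)^{\C})$ for every $i\ge 2$. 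Thus $Y$ is a non-zero section of $D^{\C} \cap \bigcap_{i\ge 2}\cchar((D^i)^{\C})$.

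The one delicate point, which I expect to be the main obstacle, is to pass from this complex statement back to the real intersection appearing in the hypothesis. Here I would use that Cauchy characteristics are defined by a linear system with real coefficients. Writing $D^i$ locally as the common kernel of real $1$-forms $\theta$, at each point $p$ one has
\[
\cchar(D^i)_p = \{\, v\in D^i_p \mid (d\theta)_p(v,w)=0 \ \text{for all}\ w\in D^i_p \ \text{and all such}\ \theta \,\}.
\]
Since these equations have real coefficients, their complex solution space is precisely the complexification of their real one, so $\cchar((D^i)^{\C})_p = (\cchar(D^i)_p)^{\C}$. As complexification commutes with finite intersections of subspaces defined over $\R$, this gives
\[
D^{\C}\cap\bigcap_{i\ge 2}\cchar((D^i)^{\C}) = \Bigl(D\cap\bigcap_{i\ge 2}\cchar(D^i)\Bigr)^{\C}.
\]

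Finally I would invoke that a real subspace vanishes if and only if its complexification vanishes. The non-zero section $Y$ shows the left-hand side above is non-zero, hence so is $D\cap\bigcap_{i\ge 2}\cchar(D^i)$, which is the negation of the hypothesis. Equivalently, if $D\cap\bigcap_{i\ge 2}\cchar(D^i)=0$ then $\sym(D)$ cannot be infinite-dimensional. Everything except the interchange of complexification with the Cauchy-characteristic operation is formal; once that commutation is established, the corollary follows at once from Theorem~\ref{thm:3}.
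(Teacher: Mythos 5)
Your proof is correct and follows essentially the same route as the paper's: the contrapositive via Theorem~\ref{thm:3}, keeping only condition~(1) of a weak characteristic, and descending from the complex object to a non-zero real section by exploiting that the equations defining $\cchar(D^i)$ have real coefficients. The paper packages that last step slightly more concretely---observing that $\overline{Y}$ is also a complex weak characteristic and extracting a real non-vanishing vector field from the conjugation-stable span of $Y$ and $\overline{Y}$---but this is the same underlying fact as your commutation of complexification with the Cauchy-characteristic operation.
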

\begin{proof} Assume that $\sym(D)$ is infinite-dimensional. Then it possesses
complex week characteristic $Y$. Clearly both $\overline Y$ and $c Y$, $c\in
C$, are also complex week characteristics. Therefore, the complex subbundle
spanned by $Y$ and $\overline Y$ is stable with respect to the complex
conjugation. Hence, it also contains a real one-dimensional subbundle spanned
by a certain non-zero vector field $X$. While $X$ itself might no longer be a
week characteristic, it still lies in $D \cap (\cap_{i\ge 2} \cchar (D^i))$.
\end{proof}
\begin{rem}
We formulate Theorem~\ref{thm:2} and its corollary only for the vector
distribution with constant symbol, since the Tanaka theorem on the bound of
$\dim \sym(D)$ has been proved up to now only in this case. The Tanaka proof
does not immediately generalize to the case of non-constant symbol, and this
still remains an open problem according to the authors knowledge.
\end{rem}

\section{Examples}
\subsection{Symbols of 2-dimensional distributions of infinite type}
As a first application of Theorem~\ref{thm:1} we describe the symbols of
2-dimensional bracket generating distributions of infinite type.

Using Theorem~\ref{thm:1}, it is easy to prove that in each dimension $n\ge 2$
there exists a unique (up to isomorphism) GNLA $\m$ with $\dim \m_{-1}=2$. It
turns out to coincide with the symbol of the Goursat distribution, which can
coincides also with the canonical contact system on the jet space
$J^{n-2}(\R,\R)$.

Indeed, by Theorem~\ref{thm:1} any such GNLA must be a special extension of a
certain GNLA $\n$ with $\dim \n_{-1}=1$. But then $\n=\R$ is a one-dimensional
commutative Lie algebra, and all its special extensions are trivial. In each
dimension there is exactly one such extension. It can be described as a Lie
algebra $\m$ with a basis $\langle X, Z_1, \dots, Z_{n-1}\rangle$ and the only
non-zero Lie brackets $[X,Z_i]=Z_{i+1}$, $i=1,\dots,n-2$. Here $\deg X = -1$
and $\deg Z_i=-i$, $i=1,\dots,n-1$. It is easy to see that the contact system
on $J^{n-2}(\R,\R)$ is equivalent to the standard distribution of type $\m$.

\subsection{Symbols of 3-dimensional distributions of infinite type}
In case of $\dim \m_{-1}=3$ we already get a lot of various examples, including
the examples of non-trivial special extension of GNLA. As above, any GNLA $\m$
of infinite type with $\dim \m_{-1}=3$ is a special extension of a certain GNLA
$\n$ with $\dim \n_{-1}=2$. Note that $\n$ itself does not need to be of
infinite type. As the classification of all such GNLA unknown at the moment, it
makes also impossible to classify all infinite type GNLA with $\dim \m_{-1}=3$.

As an example, let us describe all special extensions in case, when $\n$ is a
three-dimensional Heisenberg algebra. Then all trivial special extensions of
$\n$ can be described as:
\[
\m = \langle X, Z_1, Z_2, Y_1, \dots, Y_k \rangle,
\]
where $\deg X = -1$, $\deg Y_i=-i$, $i=1,\dots,k$, $\deg Z_j=-j$, $j=1,2$. All
non-zero Lie brackets are $[X,Z_1]=Z_2$ and $[X,Y_i]=Y_{i+1}$, $i=1,\dots,k-1$.
The Lie algebra $\m$ is a semidirect product of its subalgebra $\n=\langle X,
Z_1, Z_2\rangle$ and the commutative ideal $V=\langle Y_1,\dots,Y_k\rangle$.
Geometrically the standard distribution of type $\m$ can be described as a
canonical contact system on the mixed jet bundle $J^{1,k-1}(\R,\R^2)$.

Any non-trivial special extension of $\n$ can be obtained by adding non-trivial
Lie brackets of the form:
\begin{align*}
[X,Z_1] &= Z_2 + a Y_2;\\
[X,Z_2] &= b Y_3;\\
[Z_1,Z_2] &= c Y_3.
\end{align*}
By changing $Z_1$ to $Z_1-a Y_1$ and $Z_2$ to $Z_2-b Y_2$ we can always obtain
$a=b=0$. If $a\ne 0$, then the Jacobi identity is satisfied only if $k=3$. In
this case we can always scale $a$ to $1$. Thus, we see that up to the
isomorphism there exists exactly one non-trivial special extension of the
three-dimensional Heisenberg Lie algebra. It is given by:
\[
\m = \langle X, Z_1, Z_2, Y_1, Y_2, Y_3 \rangle,
\]
where non-zero Lie brackets are given by:
\begin{align*}
[X,Y_1] &= Y_2,\quad [X,Y_2]=Y_3;\\
[X,Z_1] &= Z_2;\\
[Z_1, Z_2] &= Y_3.
\end{align*}
Direct computation shows that an arbitrary symmetry of the standard
distribution of type $\m$ depends on 4 functions of 1 variable.

\subsection{Metabelian Lie algebras}
Let $\m=\m_{-1}\oplus\m_{-2}$ be a GNLA of depth 2. The structure of this Lie
algebra is completely determined by the skew-symmetric map:
$S\colon\wedge^2\m_{-1}\to \m_{-2}$ defined as a restriction of the Lie bracket
to $\m_{-1}$. Below we assume that $\m$ is non-degenerate.

Consider the dual map $S^*\colon \m_{-2}^*\to \wedge^2\m_{-1}^*$. Since $\m$ is
generated by $\m_{-1}$, the map $S^*$ is injective. Thus, up to the isomorphism
the structure of $\m$ is determined by the subspace $\im S^* \subset
\wedge^2\m_{-1}^*$. This subspace is considered up to the natural action of the
group $GL(\m_{-1})$ on $\wedge^2\m_{-1}^*$.

To simplify the notation, let $V=\m_{-1}$, $n=\dim V$, and let $P = \im S^*
\subset \wedge^2V^*$, $m=\dim P$. If we fix a basis in $V$, then $\wedge^2V^*$
can be identified with a space of all skew-symmetric $n$ by $n$ matrices, $P$
is spanned by $m$ matrices $B_1,\dots,B_m$ and the action of $GL(V)$ on
$\wedge^2V^*$ is given by:
\[
X.B = X^t B X,\quad X\in GL(V), B\in \wedge^2V^*.
\]

Let $y$ be an arbitrary element in $\m_{-1}=V$. It is easy to see that the rank
of $\ad y$ is the maximal number of linearly independent skew-symmetric
matrices $B$ in $P$, such that $i_y B \ne 0$. In more detail, let $P_y$ be a
subspace in $P$ defined as:
\[
P_y = \{ B \in P \mid i_y B = 0 \}.
\]
Then rank of $\ad y$ is equal to to codimension of $P_y$ in $P$.
\begin{thm}\label{thm:2step}
Let $\m=\m_{-1}\oplus\m_{-2}$ be a 2-step complex graded nilpotent Lie algebra
with $\dim\m_{-2}=2$. Then Tanaka prolongation of $\m$ is infinite-dimensional.
\end{thm}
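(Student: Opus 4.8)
The plan is to deduce the statement from Theorem~\ref{thm:1}: it suffices to exhibit a non-zero $y\in\m_{-1}$ with $\rank\ad y=1$, for then the equivalence $(2)\Leftrightarrow(4)$ of that theorem (applicable since $\m$ is assumed non-degenerate and the base field is $\C$) forces $\m$ to be of infinite type. I would translate the rank condition into the language of the preceding metabelian discussion. Writing $V=\m_{-1}$, $n=\dim V$ and $P=\im S^*\subset\wedge^2V^*$, we have $\dim P=\dim\m_{-2}=2$ because $S^*$ is injective. Since $\rank\ad y$ equals the codimension of $P_y=\{B\in P\mid i_yB=0\}$, and $i_yB=0$ means exactly that $y$ lies in the kernel of the skew-symmetric matrix $B$, the task becomes: find $y\neq0$ lying in the kernel of some non-zero $B\in P$.

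Next I would record that for such a $y$ the rank is automatically correct. Indeed, non-degeneracy of $\m$ means that $\m_{-1}$ contains no non-zero central element, and for a $2$-step algebra $y\in\m_{-1}$ is central if and only if $\ad y=0$; hence $\ad y\neq0$ for every non-zero $y$, so $\rank\ad y\ge1$, i.e.\ $\dim P_y\le 1$. Thus as soon as we produce a non-zero $y$ annihilated by some non-zero $B\in P$ we get $\dim P_y=1$ and $\rank\ad y=2-1=1$, as required. The whole problem therefore reduces to showing that the pencil $\{\lambda_1B_1+\lambda_2B_2\}$ spanned by a basis $B_1,B_2$ of $P$ contains a non-zero \emph{singular} skew-symmetric matrix.

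This is where algebraic closure enters, and it is the only genuinely non-trivial point. I would split into two cases according to the parity of $n$. If $n$ is odd, then every skew-symmetric $n\times n$ matrix is singular (its determinant vanishes), so any non-zero $B\in P$ already works and a non-zero $y\in\ker B$ exists. If $n$ is even, I would consider the Pfaffian $\operatorname{Pf}(\lambda_1B_1+\lambda_2B_2)$, a homogeneous polynomial in $(\lambda_1,\lambda_2)$ of degree $n/2$. If it vanishes identically, every member of the pencil is singular and we are done as before; otherwise it is a non-zero homogeneous polynomial of positive degree in two variables over the algebraically closed field $\C$, hence it factors into linear forms and admits a root $(\lambda_1,\lambda_2)\neq0$. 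The corresponding $B=\lambda_1B_1+\lambda_2B_2$ is non-zero (as $B_1,B_2$ are independent) and singular, so again $\ker B\neq0$.

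In either case we obtain a non-zero $y$ with $\rank\ad y=1$, and Theorem~\ref{thm:1} yields that $\m$ is of infinite type. The main obstacle is precisely the even-dimensional case: one must guarantee a singular member of the pencil, which is exactly where the Pfaffian together with the algebraic closedness of $\C$ is essential, since over $\R$ a pencil of skew forms may consist entirely of non-degenerate forms, matching the real counterexample noted in the remark after Theorem~\ref{thm:1}.
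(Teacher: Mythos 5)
Your proof is correct and follows essentially the same route as the paper: reduce via condition (2) of Theorem~\ref{thm:1} to finding a non-zero $y\in\m_{-1}$ annihilated by some non-zero member of the two-dimensional pencil $P$, and produce a singular member of the pencil using algebraic closedness of the base field. The only differences are cosmetic refinements --- the paper works directly with $\det(\lambda_1B_1+\lambda_2B_2)=0$ where you use the Pfaffian and a parity split, and you add the explicit (and worthwhile) check that non-degeneracy forces $\dim P_y\le 1$, a point the paper glosses over.
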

\begin{proof}
Since the dimension of the Tanaka prolongation is preserved under extension of
the base field, we can assume that our base field is algebraically closed. Then
according to Theorem~\ref{thm:1} $\m$ is of infinite type if and only if there
exists a non-zero element $y\in \m_{-1}=V$ such that $P_y$ is one-dimensional.
Let $B_1$, $B_2$ be the basis of $P$. Then the condition $\dim P_y = 1$ is
equivalent to the existence of such $\lambda_1,\lambda_2$ that:
\[
(\lambda_1 B_1 + \lambda_2 B_2)y = 0.
\]
It is clear that this is equivalent to $\det (\lambda_1 B_1 + \lambda_2 B_2) =
0$, which always has non-trivial solutions in case of algebraically closed
field.
\end{proof}

Note that pencils of skew-symmetric matrices over an algebraically closed field
can be effectively classified using Kronecker results on pencils of matrices.
This has been done in the work of M.~Gauger~\cite{gauger}. Namely, any pair
$(A,B)$ of skew-symmetric matrices can be written as one matrix $P = \mu A+
\lambda B$, whose entries are linear forms in $\lambda$ and $\mu$. Such
matrices are classified by the following data:
\begin{itemize}
\item minimal indices $0\le m_1\le m_2\le m_p$, $p\ge0$ (in particular, the set of minimal indices can be empty);
\item elementary divisors $(\mu+a_1\lambda)^{e_1}$, \dots,
$(\mu+a_q\lambda)^{e_r}$, $(\lambda)^{f_1}$, \dots, $(\lambda)^{f_s}$ (each of
these divisors appears twice).
\end{itemize}
The canonical form of the pencil $P$ for this data is:
\begin{equation}\label{eq1}
P = \begin{pmatrix} \cM_{m_1} & & & & & & & & \\
 & \ddots & & & & & & & \\
 & & \cM_{m_p} & & & & & & \\
 & & & \cE_{e_1}(a_1) & & & & & \\
 & & & & \ddots & & & & \\
 & & & & & \cE_{e_r}(a_r) & & & \\
 & & & & & & \cF_{f_1} & & \\
 & & & & & & & \ddots & \\
 & & & & & & & & \cF_{f_s}
\end{pmatrix}
\end{equation}
where
\begin{align*}
\cM_m & = \begin{pmatrix} 0 & M_m \\
-M_m^t & 0
\end{pmatrix},
\quad (2m+1)\times(2m+1), \cM_0 = (0);\\
\cE_n(a) &= \begin{pmatrix} 0 & E_n(a) \\
-E_n(a)^t & 0
\end{pmatrix},
\quad (2n)\times(2n);\\
\cF_n &= \begin{pmatrix} 0 & F_n \\
-F_n^t & 0
\end{pmatrix},
\quad (2n)\times(2n);\\
\end{align*}
and
\begin{align*}
M_m &= \begin{pmatrix}
&&&&&& \lambda \\
&&&&& \lambda & \mu \\
&&&&\cdot &\mu & \\
&&& \cdot &\cdot && \\
&& \cdot &\cdot &&& \\
& \lambda & \cdot &&&& \\
\lambda & \mu &&&&& \\
\mu &&&&&&
\end{pmatrix},
\quad (m+1)\times m;\\
E_n(a) &=
\begin{pmatrix}
&&&&& \mu + a\lambda \\
&&&& \cdot & \lambda \\
&&& \cdot && \\
&& \cdot && \cdot & \\
&&& \cdot && \\
& \mu+a\lambda & \cdot &&& \\
\mu+a\lambda & \lambda  &&&&
\end{pmatrix},
\quad n \times n;\\
F_n &=
\begin{pmatrix}
&&&&& \lambda \\
&&&& \cdot & \mu \\
&&& \cdot && \\
&& \cdot && \cdot & \\
&&& \cdot && \\
& \lambda & \cdot &&& \\
\lambda & \mu  &&&&
\end{pmatrix},
\quad n \times n;\\
\end{align*}
In addition, the elementary divisors are considered up to non-degenerate linear
transformations of $(\lambda, \mu)$.

Note also, that the pencil $P$ above corresponds to a non-degenerate 2-step
nilpotent Lie algebra if and only if there are no minimal indices $0$ among the
set of minimal indices $m_1,\dots,m_p$. As we assume that these indices are
ordered, this is equivalent to the condition $m_1>0$.

Define the following subgroups in $GL(V)$ and the corresponding subalgebras in
$\gl(V)$:
\begin{align*}
\Aut(P) &= \{ X\in GL(V) \mid X^tBX \subset P \text{ for any }B\in P\},\\
H_0 &= \{X \in GL(V) \mid X^tBX=B \text{ for any }B\in P\},\\
\Der(P) &= \{ X\in \gl(V) \mid X^tB+BX \subset P \text{ for any }B\in P\},\\
\h_0 &= \{ X\in \gl(V) \mid X^tB+BX = 0 \text{ for any }B\in P\}.
\end{align*}

It is clear that:
\begin{itemize}
\item $\Der(P)$ and $\h_0$ are subalgebras in $\gl(V)$ corresponding to the
subgroups $\Aut(P)$ and $H_0$ respectively;
\item $H_0$ is a subgroup in $\Aut(P)$, $\h_0$ is a subalgebra in $\Der(P)$;
\item $\Aut(P)$ is naturally identified with the the group $\Aut_0(\m)$ of all
grading-preserving automorphisms of the Lie algebra $\m$ and $\Der(P)$ is
identified with the Lie algebra $\Der_0(\m)$ of all grading-preserving
derivations of $\m$;
\item $\h_0$ can be identified with all elements in $\Der_0(\m)$ that act
trivially on $\m_{-2}$.
\end{itemize}

Let us show explicitly that $\h_0$ always contains a rank 1 element. This will
give another proof that the standard prolongation of $\h_0$ is
infinite-dimensional.

Let $Q$ be any of blocks that appear on the diagonal in the canonical
form~\eqref{eq1} of the pencil $P$. That is $Q$ is one of the following
matrices: $\cM_m$ for $m\ge0$, $\cE_{e}(a)$ for $e\ge1$, or $\cF_{f}$, $f\ge1$.
We shall call $Q$ an elementary subpencil of the pencil $P$ and denote by
$\h_0(Q)$ the Lie algebra
\[
\h_0(Q) = \{ X\in \gl(k,\C)\mid X^tB+BX=0\text{ for all }B\in Q\}.
\]
It is easy to see that $\h_0(Q)$ can be embedded as a subalgebra into $\h_0$.

So, to prove the theorem it is sufficient to show that $\h_0(Q)$ contains
rank~1 element for each elementary subpencil. Moreover, since elementary
divisors are considered up to linear transformations of $(\lambda, \mu)$, we
can restrict ourselves only to elementary pencils $\cM_m$, $m\ge 1$ and
$\cF_r$, $r\ge1$.

In both these cases the subalgebra $\h_0(Q)$ is easily computed. Namely, denote
by $S_{k,l}(\C)$ the set of $k\times l$ matrices $(a_{ij})$ such that
$a_{ij}=a_{i+1,j+1}$ for any $1\le i<k$, $1\le j < l$. Denote also by $T_k(\C)$
the set of all $k\times k$ upper triangular matrices that also lie in
$S_{k,k}(\C)$. Note that both $S_{k,l}(\C)$ and $T_k(\C)$ contain elements of
rank~1 (for example, in the upper right corner).

We have:
\begin{align*}
\h_0(\cM_m) &= \left.\left\{ \begin{pmatrix} x E_{m+1} & Y \\ 0 & -x E_m
\end{pmatrix}\,\right|\, x\in\C, Y\in S_{m+1,m}(\C) \right\};
\\
\h_0(\cF_r) &= \left.\left\{ \begin{pmatrix} Y_{11} & Y_{12} \\ Y_{21} &
-Y_{11} \end{pmatrix}\,\right|\, Y_{11}, Y_{12}, Y_{21} \in T_{r}(\C) \right\};
\end{align*}
As we see, in both cases $\h_0(Q)$ contains a nilpotent element of rank 1.

\subsection{Remarks}
1. It follows form the proof of Theorem~\ref{thm:2step} that $\h_0$ contains
the block diagonal subalgebra, where each block consists of all elements in
$\h_0(Q)$ for the primitive subpencils $Q$. However, $\h_0$ can be larger than
this direct sum. For example, this happens in the case when there no minimal
indices and the set of $(\lambda)$ and $(\mu)$ where each of these divisors
appears with multiplicity $2p$ and $2q$ respectively. In this case $\m$ is
isomorphic to the direct sum of two Heisenberg lie algebras of dimension $2p+1$
and $2q+1$. The subalgebra $\h_0$ is isomorphic to
$\spp(2p,\C)\oplus\spp(2q,\C)$, while $\h_0(Q)$ for each primitive $Q$ is just
$\sll(2,\C)$.

2. It is well-known that there exist $2$-step nilpotent Lie algebras with
3-dimensional center, whose Tanaka prolongation is finite-dimensional. The
simplest example is a free 2-step nilpotent Lie algebra with 3-dimensional set
of generators and the 3-dimensional center:
\begin{align*}
\m_{-1} = \langle X_1, X_2, X_3 \rangle,\\
\m_{-2} = \langle X_{12}, X_{13}, X_{23} \rangle,
\end{align*}
where $[X_i,X_j]=X_{ij}$ for $1\le i < j \le 3$. Its Tanaka prolongation is
21-dimensional and is isomorphic to $\mathfrak{so}(7,\C)$. In this case the
subalgebra $\h_0$ is trivial.

We can also generalize this example for arbitrary number of generators. Namely,
let:
\begin{align*}
\m_{-1} = \langle X_1, X_2, \dots, X_k \rangle,\\
\m_{-2} = \langle Y_1, Y_2, Y_3 \rangle,
\end{align*}
where all non-zero Lie brackets have the form:
\begin{gather*}
[X_1,X_k]=[X_2,X_{k-1}]= \dots = Y_1;\\
[X_1,X_{k-1}]=[X_2,X_{k-2}]= \dots = Y_2;\\
[X_2,X_k]=[X_3,X_{k-1}]= \dots = Y_3.
\end{gather*}
Direct computation shows that Tanaka prolongation $\g(\m)$ of $\m$ has the
form:
\begin{itemize}
\item for $k=3$, $\g(\m)\equiv \mathfrak{so}(7,\C)$;
\item for $k=4$, $\g(\m)\equiv \mathfrak{sp}(6,\C)$;
\item for $k=5$, $\g(\m)=\m\oplus\g_0$, where $\g_0\equiv
\mathfrak{gl}(2,\C)$ and $\g_0$-module $\g_{-1}$ is irreducible;
\item for $k\ge 6$ and even, $\g(\m)=\m\oplus\g_0$, where
$\g_0\equiv\mathfrak{gl}(2,\C)\times \C$ and $\g_0$-module $\g_{-1}$ splits
into the sum of $k/2$ $\mathfrak{gl}(2,\C)$-modules $\langle X_i,
X_{k/2+i}\rangle$, $i=1,\dots,k/2-1$;
\item for $k\ge 7$ and odd, $\g(\m)=\m\oplus\g_0$, where $\g_0=\C^2$ and the
action of $\g_{0}$ on $\g_{-1}$ diagonalizes in the basis $\{X_1,X_2,\dots,
X_n\}$.
\end{itemize}

3. The above example of 2-step nilpotent Lie algebra with 3-dimensi\-o\-nal
center shows that the subalgebra $\h_0$ is trivial for generic 2-step nilpotent
Lie algebras with $\dim \m_{-2}\ge 3$. In particular, a generic GNLA of depth 2
with at least 3-dimensional center is of finite type. However, it is still an
open problem to classify all 2-step nilpotent Lie algebras with
infinite-dimensional Tanaka prolongation.

4. Even though the theorem above is formulated, and proved over $\C$, it is in
fact true over any field of characteristic $0$, and, in particular, over $\R$.
Indeed, the results of Kronecker and, thus, Gauger hold over any algebraically
closed filed of characteristic $0$. And it is easy to see that the dimension of
Tanaka prolongation does not change if we extend the field of scalars.

5. Let $D$ be a vector distribution of codimension 2 on a smooth manifold $M$,
such that $D^2=TM$. Then Theorem~\ref{thm:2step} implies that the symbol
$\m(x)$ of $D$ is of infinite type for each $x\in M$. However, this does not
imply that such distributions always have infinite-dimensional symmetry
algebras. For example, such distributions with the symbol corresponding to the
pencil $\cM_m$ are treated in~\cite{zelkryn}. It is proved that if a subbundle
$D'$ generated by all weak characteristics of $D$ is bracket generating, then
the symmetry algebra of $D$ is finite-dimensional. Note that in case of the
standard distribution $D$ of type $\m$, where $\m$ corresponds to the pencil
$\cM_m$, the subdistribution $D'$ is involutive.


\begin{thebibliography}{9}
\bibitem{ankru} I.~Anderson, B.~Kruglikov, \emph{Rank 2 distributions of Monge equations: symmetries,
equivalences, extensions}, arXiv:0910.546v1 [math.DG].

\bibitem{bcggg} R.~Bryant, S.S.~Chern, R.B.~Gardner, H.L.~Goltschmidt,
P.A.~Griffiths, \emph{Exterior differential systems}, Springer--Verlag, New
York, 1991.

\bibitem{car10} \'E.~Cartan, \emph{Les syst\`emes de Pfaff \`a cinq
 variables et les \'equations aux d\'eriv\'ees partielles du second
ordre}, Ann.~\'Ecole Norm.~Sup., \textbf{27} (1910), pp.~109--122.

\bibitem{dist2} B.~Doubrov, I.~Zelenko, \emph{On local geometry of nonholonomic
rank 2 Distributions}, Journal of London Math.~Society, \textbf{80} 2009,
pp.~545--566.

\bibitem{dist3} B.~Doubrov, I.~Zelenko, \emph{On local geometry of rank 3 distributions with 6-dimensional
square}, arXiv:math/0703662v1 [math.DG].

\bibitem{gauger} M.~Gauger, \emph{On the classificaation of metabelian Lie
algebras}, Transaction AMS, \textbf{179} (1973), pp.~293--329.

\bibitem{gqs} V.~Guillemin, D.~Quillen, S.~Sternberg, \emph{The classification of the
irreducible complex algebras of infinite type}, J.~Anal.~Math., \textbf{18}
(1967), pp.~107--112.

\bibitem{harris} J.~Harris, \emph{Algebraic geometry},  Graduate Texts in
Mathematics, Volume 133, Springer--Verlag, 1989.

\bibitem{zelkryn} W. Kry\'nski, I. Zelenko, \emph{Canonical frames for distributions
of odd rank and corank 2 with maximal Kronecker index} (in preparation).

\bibitem{kuzmich} O.~Kuzmich, \emph{Graded nilpotent Lie algebras of low
dimension}, Lobachevskij J.~Math., \textbf{3} (1999), pp.~147--184.

\bibitem{ottazzi} A.~Ottazzi, \emph{A sufficient condition for nonrigidity of Carnot
groups}, Math.~Z., \textbf{259} (2008), pp.~617--629.

\bibitem{warottazzi} A.~Ottazzi, B.~Warhurst, \emph{Rigidiry of Carnot groups}
(in preparation).

\bibitem{spencer69} D.C.~Spencer, \emph{Overdetermined systems of linear partial differential
equations}, Bull.~AMS, \textbf{75} (1969), pp.~179--239.

\bibitem{stern} S.~Sternberg, \emph{Lectures on differential geometry},
Prentice Hall, N.J., 1964.

\bibitem{tan1} N.~Tanaka, \emph{On differential systems, graded Lie
algebras and pseudo-groups}, J.\ Math.\ Kyoto.\ Univ., \textbf{10} (1970),
pp.~1--82.

\bibitem{war07} B.~Warhurst, \emph{Tanaka prolongation of free Lie algebras},
Geom.~Dedicata, \textbf{130} (2007), pp.~59--69.

\bibitem{yatsui} T.~Yatsui, \emph{On pseudo-product graded Lie algebras}, Hokkaido Math.~J., \textbf{17} (1988),
pp.~333--343.

\bibitem{yam93} K.~Yamaguchi, \emph{Differential systems associated with
  simple graded Lie algebras}, Adv.\ Studies in Pure Math., \textbf{22} (1993),
  pp.~413--494.
\end{thebibliography}
\end{document}